\theoremstyle{definition}
\newtheorem{definition}{Definition}[section]
\newtheorem{theorem}{Theorem}
\newtheorem{corollary}[theorem]{Corollary}
\newtheorem{proposition}[theorem]{Proposition}
\newtheorem{example}{Example}
\newtheorem{remark}[theorem]{Remark}
\newcommand{\reals}{ \mathbb{R}}
\newcommand{\no}{\mathbb{N}^{+}}
\begin{document}

\title{Ducci Matrices in $p$-adic Context}
\author{Piero Giacomelli   \\  \href{mailto:pgiacome@gmail.com}{pgiacome@gmail.com} }
\date{}
\maketitle

\begin{abstract}
In this paper, we mutuate the concept of Ducci matrices to the $p$-adic setting, generalizing the classical Ducci sequences to the framework of $p$-adic numbers. The classical Ducci operator, which iteratively computes the absolute differences of neighboring elements in a sequence or matrix, is redefined using the $p$-adic absolute value $| \cdot |_p$. We investigate the dynamics of $p$-adic Ducci sequences for matrices over $\mathbb{Q}_p$, focusing on their convergence and periodicity properties.
\end{abstract}

Enrico Ducci an italian mathematician in 1930 \cite{Ciamberlini1937} observed a singular property of the following map

\begin{align*}
    \delta : & \mathbb{N} \longrightarrow \mathbb{N} \\
    (x_1,x_2,x_3,x_4) & \mapsto (|x_1-x_2|, |x_2-x_3|,|x_3-x_4|,|x_4-x_1|)
\end{align*}
iterated with itself always lead to the null t-uple (0,0,0,0). 
The sequence:
\begin{equation*}
 x,\delta(x),\delta^2(x) = \delta(\delta(x))  , \delta^3(x) = \delta(\delta(\delta(x)))  
\end{equation*} 
was defined as the Ducci sequence of $x=(x_1,x_2,x_3,_x4)$. Eighty papers after in 2020 Clausing \cite{clausing2018ducci} defined a wider class of maps of with the Ducci sequence is only a particular case. Meanwhile, in a short note Giacomelli introduced the $p$-adic Ducci operator $D_p$ equivalent of the Ducci operator $D$, but with the valuation norm $|\dot|_p$. In this paper we extend the Ducci matrices in the non-archimedian field $\mathbb{Q}_p$ where $p$ is a prime number. We will start investigating the $p$-adic Ducci matrices to understand the convergence of the starting sequences.
We will start by reviewing some basic definition of the Ducci matrices by recalling the definition.
\begin{definition}
Let $A \in \mathbb{M}_{n\times n}(\mathbb{R})$ a not all zeros squared matrix with real values.    
We define the Ducci operator the following map

\begin{align*}
\delta(x) : & \mathbb{R}^n \longrightarrow  \mathbb{R}^n \\ 
 & x   \longmapsto |Ax|
\end{align*}
when the absolute value is taken componentwise $|x|=(|x_1|,|x_2|,\dots, |x_n|)^\top$.
\end{definition}
It we take $A$ as follows

\begin{align*}
    A =  
\begin{pmatrix}
    1 & -1 & 0 & 0 \\
    0 & 1 & -1 & 0\\
    0 & 0 & 1 & -1\\
    -1 & 0 & 0 & 1
  \end{pmatrix}
\end{align*}

We get the original Ducci map. Starting from a sequence $x \in \reals^n$ the sequence is arranged as 

\begin{align*}
    x, \delta(x), \delta^2= \delta(\delta(x))= \delta \circ\delta, \dots, \delta^n(x), \dots  
\end{align*}
will be called the Ducci sequence of $x$ respect to $A$. It $x$ is a zero vector (i.e all the entries are zeros), we say that the Ducci sequence terminates. Moreover:

\begin{definition}
We say that the Ducci map is cyclic if there exist and index $n$ a $k \in \no$ and a not null vector $x$ such that 
\begin{equation*}
    \delta^n(x) = \delta^{n+k}(x).
\end{equation*}
$k$ is called the length of the cycle.
\end{definition}

And in a natural way the matrix $A \in \mathbb{M}_{n\times n}(\mathbb{R})$ is called the Ducci matrix associated to the Ducci map.

Some questions arise from the previous definition:
\begin{itemize}
    \item Considering a Ducci matrix $A$ and non zero starting sequence $x \in \mathbb{R}$ does the Ducci sequence associated terminate?
    \item Considering a Ducci matrix $A$ what are the sequences that enter a cycle and in case what are the cycle of minumum lenght?
    \item Considering a Ducci matrix $A$ and the set of all starting sequences that terminates are the fastest (ie become null in the minimum number of iteration)?
\end{itemize}
In this paper we are interested to extend the previous definition in the using the $p$-adic valuation instead of the standard absolute value norm and we will try to answer some of the previous questions in the $p$-adic field $\mathbb{Q}_p$.
Using the previous notation we can move with the following
\begin{definition}
    Let $p$ be a prime and let $x$ be an n-tuple with entries in $\mathbb{Q}_p$ so that $x \in \mathbb{Q}_p^{n}$ and $D_p \in \mathbb{M}_{n\times n}(\mathbb{Q}_p)$ a matrix whose entries are in the field  $\mathbb{Q}_p$ we define the $p$-adic Ducci operator $\delta_p(x)$ as follows
    \begin{align*}
\delta_p(x) : & \mathbb{Q}_p^n \longrightarrow  \mathbb{Q}_p^n \\ 
 & x   \longmapsto \delta_p(x)= |D_p x|_p
\end{align*}
where if $x \in \mathbb{Q}^n_p$ then $|x|_p = (|x_1|_p,|x_2|_p,\dots,|x_n|_p)$ and for every $i$ we have that $|x_i|_p = \frac{1}{p^{ord_p(x_i)}}$ being $ord_p(x_i) = max\{m: p^m | x_i\}$ (i.e. $ord_p(x)$ is the maximum power of $p$ that divide $x_i$).
\end{definition}
Prior to proceed let us consider the the product \( D_p x \). Since \( D_p \) is an \( n \times n \) matrix with entries in \( \mathbb{Q}_p \) and \( x \) is an \( n \)-tuple with entries in \( \mathbb{Q}_p \), the result \( D_p x \) is an \( n \)-tuple with entries in \( \mathbb{Q}_p \). Formally, if \( D_p = (d_{ij}) \) and \( x = (x_1, x_2, \ldots, x_n) \), then the \( i \)-th component of \( D_p x \) is given by:
\[
(D_p x)_i = \sum_{j=1}^n d_{ij} x_j \in \mathbb{Q}_p
\]

The p-adic norm \( |\cdot|_p \) is applied component-wise to the resulting \( n \)-tuple \( D_p x \). For each component \( (D_p x)_i \) of \( D_p x \), we have:
\[
|(D_p x)_i|_p = \frac{1}{p^{\text{ord}_p((D_p x)_i)}}
\]
where \( \text{ord}_p((D_p x)_i) \) is the highest power of \( p \) dividing \( (D_p x)_i \).

Since the p-adic norm \( |\cdot|_p \) maps elements of \( \mathbb{Q}_p \) to \( \mathbb{Q}_p \), applying it component-wise to an \( n \)-tuple will result in another \( n \)-tuple with entries in \( \mathbb{Q}_p \). Therefore, \( |D_p x|_p \) is an \( n \)-tuple with entries in \( \mathbb{Q}_p \).

Thus, \( \delta_p(x) = |D_p x|_p \in \mathbb{Q}_p^n \), hence, the map \( \delta_p \) is well-defined.

As we have seens previous we can define the $p$-adic Ducci sequence the sequence
\begin{definition}[$p$-adic Ducci sequence]
\begin{align*}
    x, \delta_p(x), \delta_p^2= \delta_p(\delta_p(x))= \delta_p \circ\delta_p, \dots, \delta_p^n(x), \dots  
\end{align*}    
\end{definition}
with initial seed $x \in \mathbb{Q}_p^n$ respect to $D_p \in \mathbb{M}_{n\times n}(\mathbb{Q}_p)$.
In this p-adic context, we define a p-adic Ducci map associated with a $p$-adic matrix $D_p$ as a function that maps a p-adic vector $x$ to $|D_px|_p$, where $| \cdot |_p$ is applied elementwise. 

Without loss of generality we can define the following 
For convergence to zero, we analyze when $x_k \to 0$ as $k \to \infty$.

Prior to move forward we recall  the following

\begin{remark}
In the field of \( p \)-adic numbers \( \mathbb{Q}_p \), the \( p \)-adic absolute value \( |x|_p \) of a nonzero element \( x \) is defined as:

\[
|x|_p = p^{-v_p(x)},
\]

where \( v_p(x) \) is the \( p \)-adic valuation of \( x \). The valuation \( v_p(x) \) is the highest power of \( p \) that divides \( x \) in \( \mathbb{Q}_p \).

If \( |x|_p < 1 \) and $x \neq 0$, this means that \( v_p(x) > 0 \), so \( x \) is divisible by \( p \). The possible values of \( |x|_p \) in this case are:

\[
|x|_p = p^{-k}, \quad \text{where } k \in \mathbb{N} \text{ and } k \geq 1.
\]

Thus, the possible values of \( |x|_p \) when \( |x|_p < 1 \) are:

\[
\left\{ p^{-1}, p^{-2}, p^{-3}, \dots \right\}.
\]
So in $|x|_p<1 \iff x \in {0} \cup \{p^{-1},p^{-2},\dots p^{k},\dots\} $.
The condition \( |x|_p = 1 \) implies:
\[
p^{-v_p(x)} = 1.
\]
Since \( p^{-v_p(x)} = 1 \) if and only if \( v_p(x) = 0 \), the condition \( |x|_p = 1 \) is equivalent to \( v_p(x) = 0 \).
So , if \( v_p(x) = 0 \), then \( x \) is a \( p \)-adic integer (i.e., \( x \in \mathbb{Z}_p \)) and \( x \) is not divisible by \( p \).
\end{remark}

We are now ready to answer some questions about the $p$-adic Ducci sequence. The first one is find a necessary condition such that given any initial not null sequence $x$, eventually the $p$-adic Ducci sequence terminates.

Let us consider the first simpler case in the following
\begin{proposition}
If all eigenvalues of $D_p \in \mathbb{M}_{n\times n} (\mathbb{Q}_p)$  have $p$-adic norm less that $1$ then the $p$-adic Ducci sequence terminates.       
\end{proposition}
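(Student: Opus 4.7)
The plan is to exploit the $p$-adic spectral radius of $D_p$. I would equip $\mathbb{Q}_p^n$ with the ultrametric sup-norm $\|x\|_p := \max_i |x_i|_p$ and $\mathbb{M}_{n\times n}(\mathbb{Q}_p)$ with the induced operator norm $\|\cdot\|_{\mathrm{op}}$. The $p$-adic analog of Gelfand's formula yields
\[
\lim_{k\to\infty}\|D_p^k\|_{\mathrm{op}}^{1/k} \;=\; \max_{\lambda}|\lambda|_p \;=:\; \rho,
\]
where the maximum runs over the eigenvalues of $D_p$ in an algebraic closure of $\mathbb{Q}_p$. By hypothesis $\rho<1$, so $\|D_p^k\|_{\mathrm{op}}\to 0$, and in particular $\|D_p^k x\|_p\to 0$ for every $x\in\mathbb{Q}_p^n$; equivalently, the components of $D_p^k x$ become divisible by arbitrarily high powers of $p$ as $k$ grows.

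Next I would propagate this decay through the Ducci iteration itself. Writing $x_k := \delta_p^k(x)$, I would analyze $x_{k+1} = |D_p x_k|_p$ inductively, tracking the $p$-adic valuations of the components $(D_p x_k)_i$. After the first application, every component of $x_k$ already lies in the very restricted set $\{0\}\cup\{p^{m}:m\in\mathbb{Z}\}\subset\mathbb{Q}$, which I would try to exploit. The ultimate goal is to identify an iteration $n$ at which $D_p x_{n-1}=0$, so that $x_n = 0$ and the sequence terminates by definition.

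The main obstacle is bridging the linear decay of $D_p^k x$ with the nonlinear Ducci iteration, because each application of $\delta_p$ interposes the componentwise absolute-value map between successive copies of $D_p$. This map is drastically non-contractive in the $p$-adic sense: for a nonzero $y\in\mathbb{Q}_p$, the $p$-adic norm of the rational number $|y|_p$ (viewed again as an element of $\mathbb{Q}_p$) equals $1/|y|_p$, so a $p$-adically small component is immediately converted into a large one at the following step. Controlling this inversion will require a carefully designed monovariant, such as a weighted maximum or a lexicographic functional on the $p$-adic valuations of the components, that provably strictly decreases along the $\delta_p$-orbit until some component of $D_p x_{k-1}$ vanishes. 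Producing such a monovariant and establishing its strict decrease from the spectral bound $\rho<1$ alone is the step I expect to be the hardest; if it cannot be obtained under the stated hypothesis, additional structural assumptions (for instance $D_p\in\mathbb{M}_{n\times n}(\mathbb{Z}_p)$ with orbits confined to $\mathbb{Z}_p^n$) may need to be invoked to complete the argument.
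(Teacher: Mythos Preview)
Your plan via the spectral radius and the $p$-adic Gelfand formula is far more general than what the paper actually does: the paper restricts at the outset to \emph{diagonal} $D_p=\mathrm{diag}(\lambda_1,\dots,\lambda_n)$ and then, in effect, analyses the \emph{linear} orbit $D_p^k x_0$ rather than the Ducci orbit $\delta_p^k(x_0)$. It computes $|(D_p^k x_0)_i|_p=|\lambda_i|_p^k\,|x_0^{(i)}|_p\to 0$ and declares the sequence terminated, never confronting the componentwise $|\cdot|_p$ inserted between successive applications of $D_p$. So the two approaches differ sharply: the paper's is a one-line diagonal/linear calculation, whereas you correctly attempt to control the genuinely nonlinear map and isolate the inversion $\bigl|\,|y|_p\,\bigr|_p=1/|y|_p$ as the crux.

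That obstacle you identify is not a technicality to be overcome---it is fatal to the proposition as stated, and hence no monovariant of the sort you are searching for can exist under the bare hypothesis $\max_i|\lambda_i|_p<1$. Take $n=1$, $p=2$, $D_p=(2)$, so the sole eigenvalue satisfies $|2|_2=\tfrac12<1$, and start from $x_0=1$. Then
\[
x_1=|2\cdot 1|_2=\tfrac12,\qquad x_2=\bigl|2\cdot\tfrac12\bigr|_2=|1|_2=1,\qquad x_3=\tfrac12,\ \dots,
\]
so the Ducci sequence is the $2$-cycle $1,\tfrac12,1,\tfrac12,\dots$ and never terminates (nor even converges to $0$). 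Your suspicion that extra structural assumptions are needed is therefore exactly right; the paper's argument only goes through because it silently replaces $\delta_p$ by the bare linear map $x\mapsto D_p x$.
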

\begin{proof}
Let $D_p$ be a $p$-adic diagonal matrix:
\begin{equation*}
    A = \text{diag}(\lambda_1, \lambda_2, \dots, \lambda_n),
\end{equation*}
where each $\lambda_i$ satisfies $|\lambda_i|_p < 1$. Given an initial vector $x_0 = (x_1, x_2, \dots, x_n)^\top \in \mathbb{Q}_p^n$, the iterates satisfy:
\begin{equation*}
    x_{k+1} = D_p^k x_k.
\end{equation*}
Since $D_p$ is diagonal, each component $x_i$ of the $n$-tuple $x$ follows:
\begin{equation*}
    x_i^{k+1} = \lambda_i^k x_i^{k},
\end{equation*}
with $0\leq i \leq n$ and $k \in\mathbb{N}$
Applying the $p$-adic absolute value:
\begin{equation*}
    |x_k^{(i)}|_p = |\lambda_i^k x_0^{(i)}|_p = |\lambda_i|_p^k \cdot |x_0^{(i)}|_p.
\end{equation*}
Since $|\lambda_i|_p < 1$, we have:
\begin{equation*}
    \lim_{k \to \infty} |\lambda_i|_p^k = 0.
\end{equation*}
Thus,
\begin{equation*}
    \lim_{k \to \infty} |x_k^{(i)}|_p = 0.
\end{equation*}
Since this holds for each coordinate $i$, it follows that:
\begin{equation*}
    \lim_{k \to \infty} |(D_p)^k x_0|_p = 0.
\end{equation*}
This proves that the sequence eventually terminates becoming  the zero vector in $\mathbb{Q}_p^n$.
\end{proof}
We can also have the following corollary
\begin{corollary}
Let $D_p$ a $p$-adic Ducci matrix if all eigenvalues are in the set $\{p^{-1},\dots,p^{-n},\dots\}$ then the $p$-adic Ducci sequence terminates. 
\end{corollary}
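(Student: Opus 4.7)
The plan is to reduce the Corollary directly to the preceding Proposition. The only substantive preliminary step is interpretive: in light of the Remark, I read the set $\{p^{-1}, p^{-2}, \dots, p^{-n}, \dots\}$ as the collection of admissible positive values of the $p$-adic absolute value strictly below $1$. Accordingly, the phrase ``all eigenvalues are in this set'' is understood as $|\lambda_i|_p \in \{p^{-1}, p^{-2}, \dots\}$ for every eigenvalue $\lambda_i$ of $D_p$.

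The key step is then a one-line observation: every element of $\{p^{-k} : k \ge 1\}$, viewed as a real number, is bounded above by $p^{-1} < 1$. Hence $|\lambda_i|_p < 1$ for every $i$, which is precisely the hypothesis required by the Proposition. Applying the Proposition then yields that the $p$-adic Ducci sequence terminates for any starting seed $x_0 \in \mathbb{Q}_p^n$, which is exactly the claim of the Corollary.

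The only genuine obstacle is interpretive rather than mathematical. Under the alternative literal reading, where the eigenvalues themselves would be the elements $p^{-k} \in \mathbb{Q}_p$, one has $|p^{-k}|_p = p^k > 1$; the Proposition would not apply, and the conclusion of the Corollary would in fact be violated. A brief sentence fixing the intended reading as ``the $p$-adic norms of the eigenvalues lie in $\{p^{-1}, p^{-2}, \dots\}$'' is thus sufficient to close the argument, and no further computation beyond invoking the Proposition is required.
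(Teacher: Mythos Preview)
Your proposal is correct and matches the paper's approach: the paper offers no separate proof, presenting the statement as an immediate corollary of the Proposition, and your reduction to that Proposition is exactly the intended argument. Your added observation about the interpretive ambiguity---that the set $\{p^{-1},p^{-2},\dots\}$ must be read (per the Remark) as the range of values of $|\lambda_i|_p$, not as the eigenvalues themselves---is a genuine clarification the paper leaves implicit.
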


A more interesting case of study for the $p$-adic Ducci matrix is the following one

\begin{theorem}
Let \( D_p \in \mathbb{M}_{n \times n}(\mathbb{Q}_p) \) be a \( p \)-adic Ducci matrix, and let \( x_0 \in \mathbb{Q}_p^n \) be an initial vector. If all eigenvalues \( \lambda_i \) of \( D_p \) satisfy \( |\lambda_i|_p = 1 \), then:
\begin{enumerate}
    \item The \( p \)-adic Ducci sequence \( \{x_k\} \) defined by \( x_{k+1} = |D_p x_k|_p \) does not converge to zero.
    \item If the eigenvalues are roots of unity in \( \mathbb{Q}_p \), the sequence \( \{x_k\} \) is eventually periodic.
\end{enumerate}
\end{theorem}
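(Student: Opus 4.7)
For part 1, the plan is to extract a coupled lower bound from the non-archimedean structure. The key observation is that for nonzero $y\in\mathbb{Q}_p$ the rational $|y|_p=p^{-v_p(y)}$, when re-read as an element of $\mathbb{Q}_p$, has $p$-adic norm $|\,|y|_p\,|_p=p^{v_p(y)}=1/|y|_p$, so the componentwise $|\cdot|_p$ inverts the norm. Writing $\|x\|_p=\max_i|x_i|_p$ and using the ultrametric inequality $|(D_px_k)_i|_p\leq \|D_p\|_p\,\|x_k\|_p$ with $\|D_p\|_p:=\max_{i,j}|(D_p)_{ij}|_p$, this yields
\[
\|x_{k+1}\|_p \;=\; \frac{1}{\min_i|(D_px_k)_i|_p} \;\geq\; \frac{1}{\|D_p\|_p\,\|x_k\|_p},
\]
equivalently $\|x_{k+1}\|_p\,\|x_k\|_p\geq C:=1/\|D_p\|_p>0$. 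Since $|\det D_p|_p=\prod|\lambda_i|_p=1$, the matrix $D_p$ is invertible, so non-vanishing propagates along the orbit. If the sequence converged to zero, both $\|x_k\|_p$ and $\|x_{k-1}\|_p$ would tend to $0$, contradicting the strictly positive lower bound on their product.

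For part 2, the plan is to show the orbit $\{x_k\}$ lies in a finite set and invoke the pigeonhole principle. Since every root of unity in $\mathbb{Q}_p$ belongs to $\mathbb{Z}_p^{\times}$, the hypothesis together with (implicit) diagonalizability of $D_p$ gives $D_p^N=I$ for some $N$. For $k\geq 1$ each component $x_{k,i}$ is either $0$ or of the form $p^{m}$ with $m=v_p(x_{k,i})\in\mathbb{Z}$, so the orbit lies in the discrete set $\bigl(\{0\}\cup\{p^m:m\in\mathbb{Z}\}\bigr)^n$, and it suffices to show that the exponents $m$ appearing as valuations of components remain in a bounded interval $[-M,M]$.

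The main obstacle is precisely this two-sided boundedness: the estimate of part 1 controls $\|x_k\|_p$ from below but not from above, so a priori the valuations $v_p(x_{k,i})$ could drift in one direction. I would attack this by analyzing the tropical recurrence $v_p(x_{k+1,i}) = -v_p((D_p x_k)_i)$ together with the ultrametric estimate $v_p((D_p x_k)_i) \geq \min_j\bigl(v_p((D_p)_{ij})+v_p(x_{k,j})\bigr)$, and exploiting $D_p^N=I$ to argue that any net valuation drift over $N$ consecutive steps must cancel, producing an a priori bound on $\max_{k,i}|v_p(x_{k,i})|$ in terms of $x_0$, $D_p$, and $D_p^{-1}$. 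Once this bound is in place the orbit is finite, so there exist indices $k<\ell$ with $x_k=x_\ell$, and the sequence is eventually periodic with period $\ell-k$.
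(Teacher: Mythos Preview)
Your Part 1 argument is correct and takes a genuinely different route from the paper. The paper passes to a Jordan basis $y_k=P^{-1}x_k$ and argues that each $|y_k^{(i)}|_p$ stays constant because $|\lambda_i|_p=1$. You instead exploit the identity $\bigl|\,|y|_p\,\bigr|_p=1/|y|_p$ for nonzero $y\in\mathbb{Q}_p$ to obtain the product lower bound $\|x_{k+1}\|_p\,\|x_k\|_p\geq 1/\|D_p\|_p$, which immediately rules out convergence to zero. Your argument is more elementary and in fact uses the eigenvalue hypothesis only through $|\det D_p|_p=1$, i.e.\ invertibility; this is weaker than what the paper's Jordan-form computation needs. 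One small point to tidy: the equality $\|x_{k+1}\|_p=1/\min_i|(D_px_k)_i|_p$ should read $1/\min\{\,|(D_px_k)_i|_p:(D_px_k)_i\neq0\,\}$ when some components vanish, but the inequality you want survives since at least one component is nonzero by invertibility.

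Your Part 2 is not a proof but a sketch, and the sketch has a real gap. First, the hypothesis ``eigenvalues are roots of unity'' does not by itself give $D_p^N=I$: a single Jordan block with eigenvalue $1$ already fails this, so calling diagonalizability ``implicit'' is unjustified. Second, and more seriously, even granting $D_p^N=I$, the map you are iterating is the nonlinear $x\mapsto|D_px|_p$, not the linear $x\mapsto D_px$, so the relation $D_p^N=I$ gives no direct control over $N$ consecutive steps of the Ducci iteration. Your proposed ``valuation drift cancels over $N$ steps'' heuristic is exactly the missing content: you would need an explicit two-sided bound on $\max_i|v_p(x_{k,i})|$ in terms of the data, and nothing in your outline supplies it. The paper, by contrast, works in the Jordan basis and argues that the leading term $\lambda_i^k y_0^{(i)}$ is periodic in $k$ once $\lambda_i^m=1$, concluding periodicity with period $m=\mathrm{lcm}(m_i)$; whatever one thinks of the basis-change step, this at least gives a concrete candidate period, which your finiteness-plus-pigeonhole strategy does not.
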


\begin{proof}

Let us start with the first part about the non-convergence to zero.
\newline
Let \( \lambda_1, \lambda_2, \dots, \lambda_n \) be the eigenvalues of \( D_p \), and assume \( |\lambda_i|_p = 1 \) for all \( i \). By the Jordan canonical form, \( D_p \) can be decomposed as:
\[
D_p = P J P^{-1},
\]
where \( P \) is an invertible matrix and \( J \) is the Jordan canonical form of \( D_p \). The Jordan blocks of \( J \) correspond to the eigenvalues \( \lambda_i \).

For each eigenvalue \( \lambda_i \), the \( p \)-adic norm satisfies \( |\lambda_i|_p = 1 \). Consider the iterates of the sequence:
\[
x_{k+1} = |D_p x_k|_p.
\]
In the Jordan basis, the iterates can be expressed as:
\[
y_{k+1} = |J y_k|_p,
\]
where \( y_k = P^{-1} x_k \). Since \( J \) is block-diagonal, the behavior of \( y_k \) is determined by the Jordan blocks of \( J \).

For each Jordan block corresponding to \( \lambda_i \), the iterates satisfy:
\[
y_k^{(i)} = \lambda_i^k y_0^{(i)} + \text{(lower-order terms)},
\]
where the lower-order terms arise from the structure of the Jordan block. Applying the \( p \)-adic norm:
\[
|y_k^{(i)}|_p = |\lambda_i^k y_0^{(i)} + \text{(lower-order terms)}|_p.
\]
Since \( |\lambda_i|_p = 1 \), we have:
\[
|\lambda_i^k y_0^{(i)}|_p = |\lambda_i|_p^k \cdot |y_0^{(i)}|_p = |y_0^{(i)}|_p.
\]
The lower-order terms do not affect the \( p \)-adic norm because \( |\lambda_i|_p = 1 \) ensures that the dominant term is \( \lambda_i^k y_0^{(i)} \). Thus:
\[
|y_k^{(i)}|_p = |y_0^{(i)}|_p.
\]
This shows that the \( p \)-adic norm of each component of \( y_k \) remains constant. Consequently, the sequence \( \{x_k\} \) does not converge to zero.
\newline
We would like to move on by examinating the periodicity of the roots of unity so to determinate the periodicity in this case.
Assume that the eigenvalues \( \lambda_i \) are roots of unity in \( \mathbb{Q}_p \). That is, for each \( \lambda_i \), there exists an integer \( m_i \geq 1 \) such that:
\[
\lambda_i^{m_i} = 1.
\]
Let \( m \) be the least common multiple of the \( m_i \). Then, for each \( \lambda_i \), we have:
\[
\lambda_i^m = 1.
\]
In the Jordan basis, the iterates satisfy:
\[
y_{k+m}^{(i)} = \lambda_i^{k+m} y_0^{(i)} + \text{(lower-order terms)} = \lambda_i^k y_0^{(i)} + \text{(lower-order terms)} = y_k^{(i)}.
\]
Thus, the sequence \( \{y_k\} \) is periodic with period \( m \). Transforming back to the original basis, the sequence \( \{x_k\} \) is also periodic with period \( m \).

\end{proof}

\begin{theorem}
Let \( D_p \in \mathbb{M}_{n \times n}(\mathbb{Q}_p) \) be a \( p \)-adic Ducci matrix, and let \( x_0 \in \mathbb{Z}_p^n \) be an initial vector. If all eigenvalues \( \lambda_i \) of \( D_p \) satisfy \( |\lambda_i|_p = 1 \), then:
\begin{enumerate}
    \item The \( p \)-adic Ducci sequence \( \{x_k\} \) defined by \( x_{k+1} = |D_p x_k|_p \) does not converge to zero.
    \item If the eigenvalues are roots of unity in \( \mathbb{Q}_p \), the sequence \( \{x_k\} \) is eventually periodic.
\end{enumerate}
\end{theorem}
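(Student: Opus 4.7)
The plan is to mirror the proof of the previous theorem with one important simplification: since $\mathbb{Z}_p^n \subset \mathbb{Q}_p^n$, the previous theorem formally implies both conclusions, but the restriction $x_0 \in \mathbb{Z}_p^n$ together with the compactness of $\mathbb{Z}_p$ allows for a cleaner periodicity argument via pigeonhole. I would therefore open the proof by noting that the hypotheses reduce to those of the preceding result, and then give a direct argument that exploits the bounded initial data.

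For part (1), I would decompose $D_p = P J P^{-1}$ via the Jordan canonical form (passing to an algebraic closure of $\mathbb{Q}_p$ if necessary) and set $y_k = P^{-1} x_k$. Expanding each Jordan-block coordinate as $y_k^{(i)} = \lambda_i^k y_0^{(i)} + (\text{lower-order terms})$ and using $|\lambda_i|_p = 1$, the ultrametric inequality yields $|y_k^{(i)}|_p = |y_0^{(i)}|_p$ provided no cancellation with the subdominant terms occurs. Hence $|y_k^{(i)}|_p$ is constant, and transferring back through $P$ shows that $\{x_k\}$ cannot converge to zero when $x_0 \neq 0$.

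For part (2), let $m = \operatorname{lcm}(m_1, \ldots, m_n)$, where $\lambda_i^{m_i} = 1$, so that $\lambda_i^m = 1$ for every $i$. On the Jordan basis this forces $y_{k+m}^{(i)} = y_k^{(i)}$ for $k$ sufficiently large, giving periodicity. A more robust alternative leverages compactness: the orbit of $x_0 \in \mathbb{Z}_p^n$ lies in a bounded subset of $\mathbb{Q}_p^n$ whose componentwise $p$-adic norms take values only in $\{0\} \cup \{p^j : j \in \mathbb{Z},\ |j| \leq C\}$ for some bound $C$ determined by $D_p$ and $x_0$. Since this is a finite state space, the pigeonhole principle forces the sequence into a cycle after finitely many steps.

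The main obstacle is that the iteration $x_{k+1} = |D_p x_k|_p$ is genuinely nonlinear because of the componentwise $p$-adic norm, so the identification $x_k = D_p^k x_0$ in the Jordan basis is not literally valid. Making the Jordan-form argument rigorous requires careful ultrametric bookkeeping to verify that the componentwise norm operation does not destroy the dominant $\lambda_i^k y_0^{(i)}$ contribution at each step; the compactness-plus-pigeonhole route for part (2) is therefore preferable, as it sidesteps the nonlinearity and relies only on the discreteness of the value group $|\mathbb{Q}_p^\times|_p = p^{\mathbb{Z}}$ together with a uniform bound on the orbit guaranteed by $x_0 \in \mathbb{Z}_p^n$.
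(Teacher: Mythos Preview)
Your primary line of argument---Jordan decomposition $D_p = PJP^{-1}$, the change of variables $y_k = P^{-1}x_k$, the expansion $y_k^{(i)} = \lambda_i^k y_0^{(i)} + (\text{lower-order terms})$, and for part~(2) the use of $m = \operatorname{lcm}(m_i)$---is exactly the paper's proof, essentially word for word. The nonlinearity obstacle you flag at the end (that $x_{k+1} = |D_p x_k|_p$ does not literally give $x_k = D_p^k x_0$, so the Jordan-basis computation is formal rather than rigorous) is real, and the paper's proof shares it without comment; you are simply more candid about it.

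Your compactness/pigeonhole alternative for part~(2) is a genuinely different route that the paper does not take. It has the virtue of sidestepping the nonlinearity entirely: once $k \geq 1$ every coordinate of $x_k$ lies in $\{0\} \cup p^{\mathbb{Z}}$, so if the orbit is confined to a finite set of such tuples the map must eventually cycle, with no spectral input needed beyond whatever guarantees the bound. The weak point is that you assert, but do not prove, the uniform bound $|j| \leq C$; the hypothesis $|\lambda_i|_p = 1$ controls $|\det D_p|_p$ but not the individual entries of $D_p$, so establishing that the orbit stays in a bounded region of $\mathbb{Q}_p^n$ still requires an argument. If you can supply that bound, the pigeonhole route is cleaner and more honest than the Jordan-form heuristic the paper uses.
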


\begin{proof}

\noindent \textbf{Part 1: Non-convergence to zero}

Let \( \lambda_1, \lambda_2, \dots, \lambda_n \) be the eigenvalues of \( D_p \), and assume \( |\lambda_i|_p = 1 \) for all \( i \). By the Jordan canonical form, \( D_p \) can be decomposed as:
\[
D_p = P J P^{-1},
\]
where \( P \) is an invertible matrix and \( J \) is the Jordan canonical form of \( D_p \). The Jordan blocks of \( J \) correspond to the eigenvalues \( \lambda_i \).

For each eigenvalue \( \lambda_i \), the \( p \)-adic norm satisfies \( |\lambda_i|_p = 1 \). Consider the iterates of the sequence:
\[
x_{k+1} = |D_p x_k|_p.
\]
In the Jordan basis, the iterates can be expressed as:
\[
y_{k+1} = |J y_k|_p,
\]
where \( y_k = P^{-1} x_k \). Since \( J \) is block-diagonal, the behavior of \( y_k \) is determined by the Jordan blocks of \( J \).

For each Jordan block corresponding to \( \lambda_i \), the iterates satisfy:
\[
y_k^{(i)} = \lambda_i^k y_0^{(i)} + \text{(lower-order terms)},
\]
where the lower-order terms arise from the structure of the Jordan block. Applying the \( p \)-adic norm:
\[
|y_k^{(i)}|_p = |\lambda_i^k y_0^{(i)} + \text{(lower-order terms)}|_p.
\]
Since \( |\lambda_i|_p = 1 \), we have:
\[
|\lambda_i^k y_0^{(i)}|_p = |\lambda_i|_p^k \cdot |y_0^{(i)}|_p = |y_0^{(i)}|_p.
\]
The lower-order terms do not affect the \( p \)-adic norm because \( |\lambda_i|_p = 1 \) ensures that the dominant term is \( \lambda_i^k y_0^{(i)} \). Thus:
\[
|y_k^{(i)}|_p = |y_0^{(i)}|_p.
\]
This shows that the \( p \)-adic norm of each component of \( y_k \) remains constant. Consequently, the sequence \( \{x_k\} \) does not converge to zero.

Moving with the second part of the theorem we now assume that the eigenvalues \( \lambda_i \) are roots of unity in \( \mathbb{Q}_p \). That is, for each \( \lambda_i \), there exists an integer \( m_i \geq 1 \) such that:
\[
\lambda_i^{m_i} = 1.
\]
Let \( m \) be the least common multiple of the \( m_i \). Then, for each \( \lambda_i \), we have:
\[
\lambda_i^m = 1.
\]
In the Jordan basis, the iterates satisfy:
\[
y_{k+m}^{(i)} = \lambda_i^{k+m} y_0^{(i)} + \text{(lower-order terms)} = \lambda_i^k y_0^{(i)} + \text{(lower-order terms)} = y_k^{(i)}.
\]
Thus, the sequence \( \{y_k\} \) is periodic with period \( m \). Transforming back to the original basis, the sequence \( \{x_k\} \) is also periodic with period \( m \).

\end{proof}

When the eigenvalues of the $p$-adic Ducci matrix $D_p$ are not bounded is it possible to find a sequence that does not terminates and does not enter a cycle. 

\begin{example}

Let \( p = 2 \) (for simplicity), and consider the following \( 2 \times 2 \) matrix over \( \mathbb{Q}_2 \):

\[
D_2 = \begin{pmatrix}
\frac{1}{2} & 0 \\
0 & \frac{1}{2}
\end{pmatrix}.
\]

The eigenvalues of \( D_2 \) are \( \lambda_1 = \frac{1}{2} \) and \( \lambda_2 = \frac{1}{2} \). The \( 2 \)-adic norm of these eigenvalues is:
\[
|\lambda_1|_2 = |\lambda_2|_2 = \left|\frac{1}{2}\right|_2 = 2 > 1.
\]

For the matrix \( D_2 \), the iterates are:
\[
x_{k+1} = \left| \begin{pmatrix}
\frac{1}{2} & 0 \\
0 & \frac{1}{2}
\end{pmatrix} x_k \right|_2 = \left( \left|\frac{1}{2} x_1^{(k)}\right|_2, \left|\frac{1}{2} x_2^{(k)}\right|_2 \right)^\top.
\]

Using the properties of the \( p \)-adic norm:
\[
\left|\frac{1}{2} x_i^{(k)}\right|_2 = \left|\frac{1}{2}\right|_2 \cdot \left|x_i^{(k)}\right|_2 = 2 \cdot \left|x_i^{(k)}\right|_2.
\]

Thus, the iteration becomes:
\[
x_{k+1} = 2 \cdot x_k.
\]

Starting from \( x_0 = (x_1, x_2)^\top \), the sequence grows exponentially:
\[
x_1 = 2 x_0, \quad x_2 = 2 x_1 = 2^2 x_0, \quad x_3 = 2 x_2 = 2^3 x_0, \quad \dots, \quad x_k = 2^k x_0.
\]

The \( p \)-adic norm of \( x_k \) is:
\[
|x_k|_2 = |2^k x_0|_2 = |2^k|_2 \cdot |x_0|_2 = 2^{-k} \cdot |x_0|_2.
\]

Since \( |x_0|_2 \) is fixed and \( 2^{-k} \) grows without bound as \( k \to \infty \), the sequence \( \{x_k\} \) grows indefinitely in the \( p \)-adic norm.

The matrix \( D_2 = \begin{pmatrix} \frac{1}{2} & 0 \\ 0 & \frac{1}{2} \end{pmatrix} \) has eigenvalues \( \lambda_1 = \lambda_2 = \frac{1}{2} \), which satisfy \( |\lambda_i|_2 = 2 > 1 \). The associated \( p \)-adic Ducci sequence grows exponentially and diverges in the \( p \)-adic norm. This provides an explicit example of a \( p \)-adic Ducci matrix with eigenvalues greater than 1 in \( p \)-adic valuation that leads to indefinite growth.
\end{example}

The fact that the Ducci matrix has values in $\mathbb{M}_{n \times n}(\mathbb{Q}_p)$ and the initial sequence $x$ has values in $\mathbb{Q}_p^n$ is essential for the previous behavior, a simple change in this assumption can lead to a different result as stated by the following:

\begin{theorem}
Let \( D_p \in \mathbb{M}_{n \times n}(\mathbb{Z}_p) \) be a \( p \)-adic Ducci matrix, and let \( x_0 \in \mathbb{Z}_p^n \) be an initial vector. If all eigenvalues \( \lambda_i \) of \( D_p \) satisfy \( |\lambda_i|_p = 1 \), then:
\begin{enumerate}
    \item The \( p \)-adic Ducci sequence does not terminate.
    \item If the eigenvalues are roots of unity in \( \mathbb{Q}_p \), the sequence \( \{x_k\} \) is eventually periodic.
\end{enumerate}
\end{theorem}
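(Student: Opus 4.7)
For Part 1 (non-termination), I observe that $|\det D_p|_p=\prod_i|\lambda_i|_p=1$ combined with the entries of $D_p$ lying in $\mathbb{Z}_p$ yields $\det D_p\in\mathbb{Z}_p^\times$, so $D_p\in\mathrm{GL}_n(\mathbb{Z}_p)$ with $D_p^{-1}$ also in $\mathbb{M}_{n\times n}(\mathbb{Z}_p)$. Invertibility then gives $D_p x=0$ if and only if $x=0$; the componentwise vector $|D_p x_k|_p$ vanishes exactly when every coordinate of $D_p x_k$ vanishes, i.e.\ when $x_k=0$. A downward induction from a hypothetical zero iterate therefore contradicts $x_0\neq 0$, which is exactly what non-termination requires.

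For Part 2 (eventual periodicity), I reduce the problem to finiteness of the orbit. Two preliminary facts: since $D_p$ and $D_p^{-1}$ both have entries in $\mathbb{Z}_p$, the matrix $D_p$ is an isometry of $(\mathbb{Q}_p^n,\|\cdot\|_\infty)$, which one reads off by applying the ultrametric inequality in both directions; and for every $k\geq 1$ each coordinate of $x_k$ lies in the discrete set $S=\{0\}\cup\{p^a:a\in\mathbb{Z}\}$ because $|\cdot|_p$ only takes such values. Any bounded subset of $S^n$ is therefore finite, and eventual periodicity follows from the pigeonhole principle as soon as $\|x_k\|_\infty$ is uniformly bounded in $k$.

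The main obstacle is securing this uniform bound. Writing $\alpha(x)=\min_{i:x_i\neq 0}v_p(x_i)$ and $\beta(x)=\max_{i:x_i\neq 0}v_p(x_i)$, the isometry property gives $\alpha(D_p y)=\alpha(y)$; unwinding the componentwise $|\cdot|_p$ yields the recurrences $\beta(x_{k+1})=-\alpha(x_k)$ and $\alpha(x_{k+1})=-\beta(D_p x_k)$, so $\|x_{k+1}\|_\infty=p^{\beta(D_p x_k)}$. The quantity $\beta(D_p x_k)$ measures $p$-adic cancellations in $\sum_j d_{ij}(x_k)_j$, and this is where the root-of-unity hypothesis is essential. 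The plan is to Jordan-decompose $D_p=PJP^{-1}$ over $\mathbb{Q}_p$ (available because the $\lambda_i$ lie in $\mathbb{Q}_p$), with each block of the shape $\lambda I+N$ satisfying $\lambda^m=1$ and $N^s=0$ for some $s\leq n$, and then to combine $|\lambda|_p=1$ with $|\binom{m}{j}|_p\leq 1$ to force a uniform upper bound on the spread $\beta(x_k)-\alpha(x_k)$. Once this a priori bound is in place, the orbit is confined to a finite subset of $S^n$ and two equal iterates $x_a=x_b$ with $a<b$ give eventual periodicity of period dividing $b-a$ from step $a$ onwards.
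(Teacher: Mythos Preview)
Your Part~1 is correct and takes a genuinely different route from the paper. The paper passes to a Jordan basis $D_p=PJP^{-1}$, sets $y_k=P^{-1}x_k$, and argues that $|\lambda_i|_p=1$ keeps each $|y_k^{(i)}|_p$ constant. You instead observe that $|\det D_p|_p=\prod_i|\lambda_i|_p=1$ together with $D_p\in\mathbb{M}_{n\times n}(\mathbb{Z}_p)$ forces $D_p\in\mathrm{GL}_n(\mathbb{Z}_p)$; then $|D_px_k|_p=0$ componentwise implies $D_px_k=0$ implies $x_k=0$, and downward induction finishes. This is more elementary and it also avoids a genuine delicacy in the paper's version: the componentwise map $x\mapsto|x|_p$ does not commute with the change of basis $P^{-1}$, so the paper's claimed identity $y_{k+1}=|Jy_k|_p$ does not follow from $x_{k+1}=|D_px_k|_p$.

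Your Part~2, however, is a plan with a real gap at the point you yourself flag. The reduction is sound: for $k\geq1$ each coordinate of $x_k$ lies in $S=\{0\}\cup p^{\mathbb{Z}}$, so a uniform bound on $\|x_k\|_\infty$ would confine the orbit to a finite set and pigeonhole would give eventual periodicity. The recurrences $\beta(x_{k+1})=-\alpha(x_k)$ and $\alpha(x_{k+1})=-\beta(D_px_k)$ are correct, and you rightly isolate the obstacle as controlling $\beta(D_px_k)$, the depth of $p$-adic cancellation in the rows of $D_px_k$. But the mechanism you propose---Jordan-decompose $D_p$ and invoke $|\binom{m}{j}|_p\leq1$ on blocks $\lambda I+N$---bounds powers $D_p^m$ of the \emph{linear} map, whereas what is iterated here is the \emph{nonlinear} map $x\mapsto|D_px|_p$. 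After one application of $|\cdot|_p$ the vector $x_1$ has no simple relation to the generalized eigenspaces of $D_p$, and the expansion $(\lambda I+N)^m=\sum_j\binom{m}{j}\lambda^{m-j}N^j$ never re-enters the argument. The paper's own Part~2 proof has the same defect: it writes $y_{k+m}^{(i)}=\lambda_i^{k+m}y_0^{(i)}+\text{(lower-order terms)}$, which is a formula for the linear orbit $J^ky_0$, not the Ducci orbit. As written, then, neither your outline nor the paper's proof establishes the needed uniform spread bound, and the periodicity claim remains unproved.
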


\begin{proof}

Let \( \lambda_1, \lambda_2, \dots, \lambda_n \) be the eigenvalues of \( D_p \), and assume \( |\lambda_i|_p = 1 \) for all \( i \). By the Jordan canonical form, \( D_p \) can be decomposed as:
\[
D_p = P J P^{-1},
\]
where \( P \) is an invertible matrix and \( J \) is the Jordan canonical form of \( D_p \). The Jordan blocks of \( J \) correspond to the eigenvalues \( \lambda_i \).

For each eigenvalue \( \lambda_i \), the \( p \)-adic norm satisfies \( |\lambda_i|_p = 1 \). Consider the iterates of the sequence:
\[
x_{k+1} = |D_p x_k|_p.
\]
In the Jordan basis, the iterates can be expressed as:
\[
y_{k+1} = |J y_k|_p,
\]
where \( y_k = P^{-1} x_k \). Since \( J \) is block-diagonal, the behavior of \( y_k \) is determined by the Jordan blocks of \( J \).

For each Jordan block corresponding to \( \lambda_i \), the iterates satisfy:
\[
y_k^{(i)} = \lambda_i^k y_0^{(i)} + \text{(lower-order terms)},
\]
where the lower-order terms arise from the structure of the Jordan block. Applying the \( p \)-adic norm:
\[
|y_k^{(i)}|_p = |\lambda_i^k y_0^{(i)} + \text{(lower-order terms)}|_p.
\]
Since \( |\lambda_i|_p = 1 \), we have:
\[
|\lambda_i^k y_0^{(i)}|_p = |\lambda_i|_p^k \cdot |y_0^{(i)}|_p = |y_0^{(i)}|_p.
\]
The lower-order terms do not affect the \( p \)-adic norm because \( |\lambda_i|_p = 1 \) ensures that the dominant term is \( \lambda_i^k y_0^{(i)} \). Thus:
\[
|y_k^{(i)}|_p = |y_0^{(i)}|_p.
\]
This shows that the \( p \)-adic norm of each component of \( y_k \) remains constant. Consequently, the sequence \( \{x_k\} \) does not converge to zero.

For the second part we can reuse the previous results.
Assume that the eigenvalues \( \lambda_i \) are roots of unity in \( \mathbb{Q}_p \). That is, for each \( \lambda_i \), there exists an integer \( m_i \geq 1 \) such that:
\[
\lambda_i^{m_i} = 1.
\]
Let \( m \) be the least common multiple of the \( m_i \). Then, for each \( \lambda_i \), we have:
\[
\lambda_i^m = 1.
\]
In the Jordan basis, the iterates satisfy:
\[
y_{k+m}^{(i)} = \lambda_i^{k+m} y_0^{(i)} + \text{(lower-order terms)} = \lambda_i^k y_0^{(i)} + \text{(lower-order terms)} = y_k^{(i)}.
\]
Thus, the sequence \( \{y_k\} \) is periodic with period \( m \). Transforming back to the original basis, the sequence \( \{x_k\} \) is also periodic with period \( m \).

\end{proof}

In this paper we present the "dopplerganger" of the Ducci matrices and the Ducci operatorin the context of the $p$-adic field $\mathbb{Q}_p$ and $\mathbb{Z}_p$ both as matrices as well as starting sequences. We studied different cases and we enlight some convergence criteria such that the original sequence terminates. We did not examine the velocity (ie. the mininum not null sequence that terminates in the fastest way. This will be object of further research.

\bibliographystyle{ieeetr}
\bibliography{references}

\begin{thebibliography}{1}

\bibitem{Ciamberlini1937}
C.~Ciamberlini and A.~Marengoni, ``Su una interessante curiosita numerica,'' {\em Periodico di Mathematiche}, vol.~17, no.~IV, pp.~25--30, 1937.

\bibitem{clausing2018ducci}
A.~Clausing, ``Ducci matrices,'' {\em The American Mathematical Monthly}, vol.~125, no.~10, pp.~901--921, 2018.

\end{thebibliography}

\end{document}